\newcommand{\bc}{\begin{center}}
\newcommand{\ec}{\end{center}}
\newcommand{\ba}{\begin{array}}
\newcommand{\ea}{\end{array}}
\newcommand{\be}{\begin{eqnarray}}
\newcommand{\ee}{\end{eqnarray}}
\newcommand{\bel}{\begin{eqnarray}\label}
\newcommand{\El}{\end{eqnarray}}
\newcommand{\bes}{\begin{eqnarray*}}
\newcommand{\Es}{\end{eqnarray*}}
\newcommand{\bn}{\begin{enumerate}}
\newcommand{\en}{\end{enumerate}}
\definecolor{MIT}{cmyk}{.24, 1.00, .78, .17} 
\definecolor{pink}{cmyk}{0, 1, 0, 0} 
\definecolor{darkgreen}{cmyk}{1,0, 1, 0} 
\newtheorem{lemma}{Lemma}
\newtheorem{definition}{Definition}
\newtheorem{theorem}{Theorem}
 \newtheorem*{theorem*}{Theorem}
 \newtheorem*{lemma*}{Lemma}
 \newtheorem*{proposition*}{Proposition}
 \newtheorem*{corollary*}{Corollary}
\newcommand{\flo}[1]{\lfloor #1 \rfloor} %floor
\newcommand{\eps}{\varepsilon} %epsilon
\newcommand{\mb}[1]{\mathbb{#1}} %blackboard bold
\newcommand{\mc}[1]{\mathcal{#1}} %mathcal 
\newcommand{\abs}[1]{\left|#1\right|} %abs
\newcommand{\norm}[1]{\lVert#1\rVert} %norm
\newcommand{\V}[1]{^{\vec{#1}} } %raise to vector
\newcommand{\J}{_{\vec{j}} } %underscore to vector j
\newcommand{\hd}{ \mc{P}_\mc{H} } %Holder density notation
\newcommand{\ti}{\tilde } %tilde 
\newcommand{\cbeta}{\lfloor \beta \rfloor}
\newcommand{\cF}{\mathcal{F}}
\newcommand{\cH}{\mathcal{H}}
\newcommand{\cP}{\mathcal{P}}
\newcommand{\cV}{\mathcal{V}}
\newtheorem{assumption}{Assumption}
\newcommand{\h}{{\rm I}\kern-0.18em{\rm H}}
\newcommand{\K}{{\rm I}\kern-0.18em{\rm K}}
\newcommand{\p}{\mb{P}}
\newcommand{\E}{\mb{E}}
\newcommand{\Z}{{\rm Z}\kern-0.18em{\rm Z}}
\newcommand{\1}{{\rm 1}\kern-0.24em{\rm I}}
\newcommand{\N}{{\rm I}\kern-0.18em{\rm N}}
\begin{document}
	
	\begin{frontmatter}
		% "Title of the paper"
		\title{Efficient Interpolation of Density Estimators}
		\runtitle{Interpolation}
		
		% indicate corresponding author with \corref{}
				    
		\author{ 
			\fnms{Paxton}
			\snm{Turner}\ead[label=paxton]{pax@mit.edu},
			\fnms{Jingbo}
			\snm{Liu}\ead[label=jingbo]{jingbol@illinois.edu},
			and
			\fnms{Philippe} \snm{Rigollet}
			\thanks{P.R. was supported by NSF awards IIS-1838071, DMS-1712596, DMS-1740751, and DMS-2022448.}
% 			\thanksref{t2}
			\ead[label=rigollet]{rigollet@math.mit.edu}
		    }
		
		\affiliation{Massachusetts Institute of Technology and University of Illinois at Urbana-Champaign }%, 
% 		\thankstext{t2}{PR was supported by NSF awards IIS-1838071, DMS-1712596, DMS-1740751, and DMS-2022448.
% 		}
		
		%\thankstext{t4}{Supported in part by ANR grant ANR-13-JS01-0004-01.}
		
		%
		\address{{Paxton Turner}\\
			{Department of Mathematics} \\
			{Massachusetts Institute of Technology}\\
			{77 Massachusetts Avenue,}\\
			{Cambridge, MA 02139-4307, USA}\\
			\printead{paxton}
		}

		\address{{Jingbo Liu}\\
		    {Department of Statistics} \\
			{University of Illinois at Urbana-Champaign}\\
			{725 S. Wright St.,}\\
			{Champaign, IL 61820, USA} \\
			\printead{jingbo}
		}
		
		\address{{Philippe Rigollet}\\
			{Department of Mathematics} \\
			{Massachusetts Institute of Technology}\\
			{77 Massachusetts Avenue,}\\
			{Cambridge, MA 02139-4307, USA}\\
			\printead{rigollet}
		}

		%\author{\fnms{???} \snm{???}\ead[label=e1]{???}}
		%\address{\printead{e1}}
		%\and
		%\author{\fnms{???} \snm{???}\ead[label=e2]{???}}
		%\address{\printead{e2}}
		%\affiliation{???}
		
		\runauthor{Turner et al.}
		
		\begin{abstract}
	    We study the problem of space and time efficient evaluation of a nonparametric estimator that approximates an unknown density. In the regime where consistent estimation is possible, we use a piecewise multivariate polynomial interpolation scheme to give a computationally efficient construction that converts the original estimator to a new estimator that can be queried efficiently and has low space requirements, all without adversely deteriorating the original approximation quality. Our result gives a new statistical perspective on the problem of fast evaluation of kernel density estimators in the presence of underlying smoothness. As a corollary, we give a succinct derivation of a classical result of Kolmogorov---Tikhomirov on the metric entropy of H\"{o}lder classes of smooth functions.
		\end{abstract}
		
		\begin{keyword}[class=AMS]
			\kwd[Primary ]{62G07}
			%\kwd{}
			\kwd[; secondary ]{68Q32}
		\end{keyword}
		\begin{keyword}[class=KWD]
		fast evaluation, compression,  multivariate interpolation, computationally efficient estimators
		\end{keyword}
		
	\end{frontmatter}

\section{\uppercase{Introduction}}
	The fast evaluation of kernel density estimators has been well-studied including approaches based on the fast Gauss transform \cite{GreStr91}, hierarchical space decompositions \cite{GreRok87}, locality sensitive hashing \cite{ChaSim17,BacChaInd18,SimRonBai19,BacIndWag19}, and binning \cite{Sco85}, as well as interpolation \cite{Jon98,Kog98}, our main technique in this work. Typically these techniques carefully leverage the structure of the kernel under consideration, and many of them operate in a worst-case framework over the dataset. In this work, we consider the problem of fast evaluation of a density estimator $\hat f$ in a statistical setting where $\hat f$ gives a good pointwise approximation to an unknown density $f$  that lies in a H\"{o}lder class of smooth functions. We show that a pointwise approximation guarantee alone, without assuming any specific structure of the estimator $\hat f$, is enough to construct a new estimator $\tilde f$ that can be stored and queried cheaply, and whose approximation error is similar to that of the original estimator. Our approach is based on a multivariate polynomial interpolation scheme of \citet{Nic72} \citep[see also][]{ChuYao77} and provides an explicit formula for $\tilde f$ in terms of some judiciously chosen queries of the original estimator.
    
	\subsection{Background and related work}
	
    Density estimation is the task of estimating an unknown density $f$ given an i.i.d. sample $X_1, \ldots, X_n \sim \p_f$, where $\p_f$ is the probability distribution associated to $f$. A popular choice of density estimator is the kernel density estimator (KDE)
    
    \begin{equation}
        \label{EQ:KDE}
        \hat f(y):=\frac{1}{nh^d}\sum_{j=1}^n K\left(\frac{X_i-y}{h}\right).
    \end{equation}
    
    With proper setting of the bandwidth parameter $h$ and choice of kernel $K$, the KDE $\hat f$ is a minimax optimal estimator over the $L$-H\"{o}lder smooth densities $\mc{P}_{\mc{H}}(\beta, L)$ of order $\beta$ \citep[see e.g.][Theorem~1.2]{Tsy09}:
    
    \begin{equation}
            \label{eqn:classical_risk}
             \inf_{ \hat{f} } \sup_{ f \in \mc{P}_{\mc{H}}(\beta, L)  }\mb{E}_f \, \norm{ \hat{f} - f }_2    = \Theta_{\beta, d, L}( n^{-\frac{\beta}{2 \beta + d} } )\,.
    \end{equation}
     
    Despite its statistical utility, the KDE \eqref{EQ:KDE} has the computational drawback that it naively requires $\Omega(n)$ time to evaluate a query. The problem of improving on these computational aspects has thus received a lot of attention.
    
    Motivated by multi-body problems, \cite{GreStr91} developed the fast Gauss transform to rapidly evaluate sums of the form \eqref{EQ:KDE} when $K(x) = \exp(-\abs{x}_2^2)$ is the Gaussian kernel. Their work is posed a worst-case batch setting where $\hat f$ is to be evaluated at $m$ points $y_1, \ldots, y_m$ specified in advance and the locations $X_1, \ldots, X_n$ lie in a box. Their techniques use hierarchical space decompositions and series expansions to show that \eqref{EQ:KDE} may be evaluated at $y_1, \ldots, y_m$ with precision $\eps$ in time $h^d (\log \frac{1}{\eps})^d (n + m)$. These results apply to any kernel that has a rapidly converging Hermite expansion \citep[see also][]{GreRok87}. There are also follow up works on the improved fast Gauss transform and tree-based methods that use related ideas \citep{YanDurGum03,LeeMooGra06}.  
   
    More recently, several works \citep{ChaSim17,BacChaInd18,SimRonBai19,BacIndWag19} are devoted to the problem of fast evaluation of \eqref{EQ:KDE} in high dimension using locality sensitive hashing. In these works, the dataset is carefully reweighted for importance sampling such that a randomly drawn datapoint $X_r$'s corresponding kernel value $K(X_r - y)$ gives a good approximation to $\hat f (y)$. This sampling procedure can be executed efficiently using hashing-based methods. For example, \citet{BacIndWag19} show that for the Laplace and Exponential kernels with bandwidth $h = 1$, e.g., the value $\hat f(y)$ can be computed with multiplicative $1 \pm \eps$ error in time $O(\frac{d}{\sqrt{\tau}\eps^2})$ even in worst case over the dataset, where $\tau$ is a uniform lower bound on the KDE.
   
    Another effective approach to this problem in high dimensions is through coresets \citep{AgaHarVar05,Cla10,PhiTai18,PhiTai19}. A coreset is a representative subset $\{X_i\}_{i \in S}$ of a dataset such that
    \[
    \hat f(y) \approx \frac{1}{nh^d} \sum_{i \in S} K\left(\frac{X_i-y}{h}\right). 
    \]
    When $h = O(1)$, for example, the results of \citet{PhiTai19} give a polynomial time algorithm in $n, d$ such that the coreset KDE yields an additive $\eps$ approximation to $\hat f$ using a coreset of size $\tilde{O}(\frac{\sqrt{d}}{\eps})$. Their results hold in worst case over the dataset and for a variety of popular kernels. The methods of \citet{PhiTai19} are powered by state-of-the-art algorithms from discrepancy theory \citep{BanDadGarLov18} \cite[see][for a comprehensive exposition on discrepancy]{Mat99,Cha00}.
    
    Our approach is most closely related to prior work on the interpolation of kernel density estimators due to \citet{Jon98} and \citet{Kog98}. Motivated by visualization and computational aspects, \citet{Jon98} studies binned and piecewise linearly interpolated univariate kernel density estimators and provides precise bounds on the mean-integrated squared error. \citet{Kog98} extends this work and constructs higher order piecewise polynomial interpolants of multivariate kernel density estimators, and shows that for very smooth densities, this procedure improves the mean-integrated squared error. We also note the recent work of \cite{BelRakTsy19,LiaRak20} demonstrating the perhaps surprising effectiveness of interpolation in nonparametric regression. 
    
    Our work differs from \cite{Kog98} in a few important respects. We do not assume $\hat f$ to be a KDE in the first place, but rather give a general method for effectively interpolating a minimax density estimator. Also, our results hold for the entire range of smoothness parameters $\beta$ and dimension $d$, while \cite{Kog98} requires the density to be at least $qd$ times differentiable when interpolating KDEs with kernels of order $q$ \citep[Definition 1.3]{Tsy09}. On the other hand, our method increases the mean squared by a multiplicative factor $\tilde{O}(c_{\beta, d}),$ while Kogure's approach improves the mean squared error (though our focus here is the sup norm). Finally, we use a different interpolation scheme as detailed in Section \ref{sec:principal_lattice}.

    \subsection{Results} 
%%TO DO: ref to Char/Indyk here?    
    We seek to impose minimal requirements on a density estimator $\hat f$ of an unknown density $f$ so that it can be converted to a new estimator $\tilde{f}$ that performs well on the following criteria.
        
    \begin{enumerate}[itemsep=3pt,parsep=3pt]
    \item (\textbf{Minimax}) $\tilde{f}$ is a minimax estimator for $f$
    \item  (\textbf{Space-efficient}) $\tilde{f}$ can be stored efficiently
    \item (\textbf{Fast querying}) $\tilde{f}$ can be evaluated efficiently
    \item (\textbf{Fast preprocessing}) $\tilde{f}$ can be constructed efficiently
    \end{enumerate}
    In the statistical setup where typically $\beta, d = O(1)$, by \textit{efficient} we mean requiring only polynomial time or space in the sample size $n$. In particular for fixed $\beta$, by \eqref{eqn:classical_risk} consistent estimation is only possible when $d \ll \log n$. In what follows we indicate dependencies on the parameters $\beta$ and $d$.
    
    The requirement that we place on the estimator $\hat f$ to be converted is the following assumption. 
    
    \begin{assumption}
         \label{ass:tail_bound}
         For all $y \in [0, 1]^d$ and $1 \geq t \geq  \eps$, we have 
             \begin{equation*}
             	\label{eqn:pointwise_tail_bound}
             	\sup_{f \in \hd(\beta, L)} \mb{P}_f\left[ \abs{ \hat f(y) - f(y)} > t \right] \leq 
             	2 \exp\left( - \frac{t^2}{\eps^2} \right),
             \end{equation*}
         where $\eps := c^* \, n^{-\beta/(2\beta + d)}$ is the minimax rate over all density estimators and $c^* = c_{\beta, d, L} > 0$. 
    \end{assumption}
     
    In particular this is satisfied if the pointwise error is a sub-Gaussian random variable with parameter $\eps$ that captures the minimax rate of estimation. For the KDE built from a kernel $K$ of order $\ell:= \flo{\beta}$ \citep[][Definition 1.3]{Tsy09} and bandwidth $h = n^{-1/(2\beta + d)}$, \eqref{eqn:pointwise_tail_bound} follows from a standard bias-variance trade-off and an application of Bernstein's inequality for bounded random variables \citep[see e.g.][]{Ver18}. Under Assumption \ref{ass:tail_bound} we have our main result.  
    
\begin{theorem}
\label{thm:main}   
    Let $\hat f$ denote a density estimator satisfying Assumption \ref{ass:tail_bound} for some $\beta > 0$ and $d \geq 1$. Let $Q$ denote the amount of time it takes to query $\hat f$. Set $\ell = \flo{\beta}$. Then there exists an estimator $\ti f$ that can be constructed in time $O(Q \binom{\ell +d}{d} n^{\frac{d}{2\beta +d}})$, that requires $O(d^2 \ell^2 \binom{\ell +d}{d} n^{\frac{d}{2\beta +d}} \log n)$ bits to store, that can be queried in time $O(d \ell \binom{\ell + d}{d}\log n)$, and that satisfies
    \begin{equation*}
    \E_f \norm{\ti f - f}_\infty < \ti c (\log n)^{1/2} n^{-\frac{\beta}{2\beta +d}} ,
    \end{equation*}
    where we may take 
    \[
    \tilde{c} = 10 \binom{\ell + d}{\ell}^{3} (2 \ell)^{2 \ell} \left\{ c^* + \frac{Ld^{\beta}}{\ell!} \right\} + 2Ld^{\frac12 (3\ell + 1)}\,.
    \]
\end{theorem}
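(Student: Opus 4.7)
The plan is to construct $\tilde f$ as a piecewise polynomial interpolant on a mesh of simplices, using Nicolaides's principal lattice on each simplex. First I would partition $[0,1]^d$ into a regular grid of cubes of side length $h \asymp n^{-1/(2\beta+d)}$, then subdivide each cube into $d!$ simplices via a standard Kuhn triangulation, yielding a mesh with $M \asymp n^{d/(2\beta+d)}$ simplices. On each simplex $S$ I would place the principal lattice of order $\ell = \lfloor \beta \rfloor$, which has $\binom{\ell+d}{d}$ nodes and is unisolvent for polynomials of degree at most $\ell$. Querying $\hat f$ at every node and taking the unique degree-$\ell$ polynomial interpolant on each simplex defines $\tilde f$ pointwise.

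For the error analysis, pick $y \in S$ and decompose
\begin{equation*}
|\tilde f(y) - f(y)| \leq |\tilde f(y) - \Pi_S f(y)| + |\Pi_S f(y) - f(y)|,
\end{equation*}
where $\Pi_S f$ denotes the exact polynomial interpolant of $f$ at the same nodes. The deterministic second term is the standard interpolation error for a $\beta$-H\"older function on a simplex of diameter $O(\sqrt{d}\,h)$; a Taylor expansion of $f$ to order $\ell$ at a vertex together with the reproducing property $\Pi_S p = p$ for $\deg p \le \ell$ bounds it by $(L/\ell!)\, d^{O(\ell)} h^\beta$. The first term, by linearity of interpolation, equals $|\sum_i L_i^S(y)(\hat f(x_i^S) - f(x_i^S))|$, where $\{L_i^S\}$ are the Lagrange basis polynomials; this is controlled by $\Lambda_\ell \cdot \max_i |\hat f(x_i^S) - f(x_i^S)|$, with $\Lambda_\ell$ the Lebesgue constant of the principal lattice.

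Next I would apply Assumption \ref{ass:tail_bound} at each of the $\lesssim M \binom{\ell+d}{d}$ interpolation nodes with threshold $t \asymp \eps \sqrt{\log n}$ and take a union bound, yielding that with probability at least $1 - n^{-c}$ the pointwise noise is uniformly bounded by $O(\eps \sqrt{\log n})$. Combining with the preceding display gives, on this good event,
\begin{equation*}
\|\tilde f - f\|_\infty \lesssim \Lambda_\ell\, \eps \sqrt{\log n} + L\, d^{O(\ell)} h^\beta,
\end{equation*}
and the choice $h \asymp n^{-1/(2\beta+d)}$ balances the two terms at the rate claimed. To pass from the high-probability bound to the expected sup-norm I would control $\|\tilde f - f\|_\infty$ on the complementary event by a polynomial in $n$ via sub-Gaussianity of each $\hat f(x_i) - f(x_i)$, which contributes negligibly.

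The complexity claims are bookkeeping: storing each simplex's $\binom{\ell+d}{d}$ node values to $O(\log n)$ bits of precision, locating the simplex containing a query point in $O(d \log n)$ time (coordinate-wise binary search plus an $O(d\ell)$ identification within the cube), and evaluating the local interpolant in $O(d \ell \binom{\ell+d}{d})$ operations. The main obstacle I foresee is producing an explicit sup-norm estimate on $\Lambda_\ell$ matching the $\binom{\ell+d}{\ell}^{2}(2\ell)^{2\ell}$ factor appearing in the stated constant, which requires a careful bound on each Lagrange polynomial of the principal lattice over the simplex; this is the main technical content borrowed from \cite{Nic72,ChuYao77}, and once in hand, the rest of the argument is a combination of the two error terms above with the union bound.
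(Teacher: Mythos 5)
Your outline follows essentially the same route as the paper: partition $[0,1]^d$ into cubes of side $h = n^{-1/(2\beta+d)}$, interpolate with degree-$\ell$ polynomials on principal lattices, control the node errors by Assumption \ref{ass:tail_bound} plus a union bound at threshold $t \asymp \eps\sqrt{\log n}$, compare against the local Taylor polynomial of $f$ (Lemma \ref{lem:taylor_holder}), and finish with bookkeeping. The one genuine packaging difference is the stability step: you bound $|\tilde f(y)-\Pi_S f(y)|$ by the Lebesgue constant $\Lambda_\ell$ of the principal lattice, whereas the paper works in the monomial basis and lower-bounds the minimum singular value of the Vandermonde-type matrix (Lemma \ref{lem:Vandermonde}). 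Your formulation is arguably cleaner, and note that the second (bias) term also needs $\Lambda_\ell$: writing $f-\Pi_S f=(f-p)-\Pi_S(f-p)$ with $p$ the Taylor polynomial, the reproduction property alone does not suffice; you must again pay the Lebesgue constant on the remainder values, which is consistent with the fact that in the paper's $\tilde c$ the factor $\binom{\ell+d}{\ell}^3(2\ell)^{2\ell}$ multiplies both $c^*$ and $Ld^{\beta}/\ell!$.

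Two concrete issues remain. First, the bound on $\Lambda_\ell$ that you flag as the "main obstacle" and propose to borrow from Nicolaides and Chung--Yao is not in those references in the explicit form you need; it is precisely the paper's own contribution (Lemma \ref{lem:Vandermonde}). It is, however, a short argument from the explicit basis formula \eqref{eqn:basis_fn}: on the simplex each numerator $|\lambda_t(x)-r/\ell|\le 1$ and each denominator $|\lambda_t(U_i)-r/\ell|\ge 1/\ell$ (since $\ell\lambda_t(U_i)$ is an integer), so $|p_i(x)|\le \ell^{\ell}$ and $\Lambda_\ell \le \binom{\ell+d}{\ell}\ell^{\ell}$; without supplying this (or the singular-value bound), the explicit constant $\tilde c$ in the statement is not established. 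Second, the Kuhn triangulation into $d!$ simplices per cube multiplies the number of query nodes, the construction time, and the storage by a factor of order $d!$ relative to the theorem's stated bounds $O(Q\binom{\ell+d}{d}n^{d/(2\beta+d)})$ and $O(d^2\ell^2\binom{\ell+d}{d}n^{d/(2\beta+d)}\log n)$, whose $d$-dependence is meant to be explicit; the paper avoids this by using a single principal lattice per cube (the lattice of $\Delta_d$ rescaled into $I_{\vec j}$) and evaluating that one polynomial over the whole cube, at the cost of bounding the basis polynomials (equivalently the monomials) on the cube rather than only on the simplex. For fixed $d$ your variant gives the same rate in $n$, but as written it does not match the quantitative claims of the theorem.
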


    In particular, for $\beta, d = O(1)$, we can evaluate queries to $\ti f$ in nearly constant time, and the estimator $\ti f$ can be stored using sublinear space. Moreover, $\ti f$ can be preprocessed in subquadratic time, assuming that the evaluation time of the original estimator $\hat f$ is $O_d(n)$, which holds for the KDE \eqref{EQ:KDE}. Finally, $\ti f$ is a minimax estimator in the sup norm up to logarithmic factors and thus minimax up to logarithmic factors in the $L_p$ norms. In addition, our construction of $\ti f$ in Section \ref{sec:principal_lattice} yields an explicit formula for $\ti f$ in terms of a sublinear number of queries of $\hat f$ at a set of judiciously chosen query points. Specifically, the estimator $\ti f$ is a piecewise multivariate interpolation of the estimator $\hat f$ on this lattice of query points. 
    	
	\subsection{Setup and notation}
	
    Fix an integer $d \ge 1$. For any  multi-index $s = (s_1,\ldots, s_d) \in \mathbb{Z}_{\geq 0}^d$, let $\abs{s} = s_1 + \cdots + s_d$ and for $x =(x_1, \ldots, x_d) \in \mathbb{R}^d$, define $s!=s_1!\cdots s_d!$ and $x^s=x_1^{s_1}\cdots x_d^{s_d}$. Let $D^s$ denote the differential operator 
    $$
    D^s=\frac{\partial^{|s|}}{\partial x_1^{s_1} \cdots \partial x_d^{s_d}}\,.
    $$
    Fix a positive real number $\beta,$ and let $\cbeta$ denote the maximal integer \textit{strictly} less than $\beta$. We reserve the notation $\norm{ \cdot }_p$ for the $L_p$ norm
    %, $\norm{\cdot}_{\ell_2}$ for the $\ell_2$ norm, 
    and $\abs{\cdot}_p$ for the $\ell_p$ norm. 
    
    Given $L>0$ we let  $\mc{H}(\beta, L)$ denote the space of H\"{o}lder functions $f:\mb{R}^d \to \mb{R}$ that are supported on the cube $[0,1]^d$, are $\cbeta$ times differentiable, and satisfy
    \begin{equation*}
%    \label{eqn:Holder}
    |D^s f(x) - D^s f(y)| \leq L \abs{x - y}_2^{\beta - \cbeta}\,,\quad 
    \end{equation*}
    for all $x, y \in \mb{R}^d$ and for all multi-indices $s$ such that $\abs{s} = \flo{\beta}$.
     
    Let $\mc{P}_{\mc{H}}(\beta, L)$ denote the set of probability density functions contained in $\mc{H}(\beta,L)$. For $f \in \mc{P}_{\mc{H}}(\beta,L)$, let $\mb{P}_f$ (resp. $\E_f$) denote the probability distribution (resp. expectation) associated to $f$. 
    
    The parameter $L$ will be fixed in what follows, so typically we write $\mc{P}_{\mc{H}}(\beta) := \mc{P}_{\mc{H}}(\beta,L)$. The constants $c, c_{\beta, d}, c_{L},$ etc. vary from line to line and their subscripts indicate parameter dependences.
    
\section{\uppercase{Efficient interpolation of density estimators}}
\label{sec:interpolation}

The important implication of Assumption \ref{ass:tail_bound} is that we can query $\hat f$ at a polynomial number of data points such that for each query $y$, $\hat f(y) \approx f(y)$, where $f$ is the unknown density. 

\begin{lemma}
\label{lem:union_bound} Let $A > 0$ and set $N = \Delta n^A$ with $\Delta >1$. Let $y_1, \ldots, y_N \subset [0,1]^d$ denote a set of points. Then with probability at least $1 - 2n^{-2}$,
\[
\abs{ \hat f(y_i) - f(y_i) } \leq \sqrt{ \log( 2 \Delta n^{A+2} ) } \, \, \eps  
\]
for all $1 \leq i \leq N$, where $\eps = c_{\beta, d, L} \, n^{-\beta/(2\beta + d)}$ is the minimax rate. 
\end{lemma}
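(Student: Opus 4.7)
The plan is a direct union bound applied to Assumption~\ref{ass:tail_bound}. First, fix a threshold
\[
t := \sqrt{\log(2\Delta n^{A+2})}\, \eps,
\]
and note that, for $n$ sufficiently large, $t$ lies in the interval $[\eps, 1]$ required by the assumption. The lower bound $t \ge \eps$ follows because $\Delta > 1$ and $A \ge 0$ imply $\log(2\Delta n^{A+2}) \ge \log 2 > 0$, and for all but the smallest $n$ one has $\log(2\Delta n^{A+2}) \ge 1$. The upper bound $t \le 1$ holds eventually since $\eps = c_{\beta,d,L}\, n^{-\beta/(2\beta+d)}$ decays polynomially in $n$ while the logarithmic factor grows only polylogarithmically.

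Next, apply Assumption~\ref{ass:tail_bound} at each fixed query point $y_i \in [0,1]^d$. Choosing $t$ as above yields
\[
\mb{P}_f\bigl[\abs{\hat f(y_i) - f(y_i)} > t\bigr] \le 2\exp\!\left(-\tfrac{t^2}{\eps^2}\right) = 2\exp\bigl(-\log(2\Delta n^{A+2})\bigr) = \frac{1}{\Delta n^{A+2}}.
\]
Now take a union bound over the $N = \Delta n^A$ points $y_1, \ldots, y_N$:
\[
\mb{P}_f\Bigl[\exists\, i:\ \abs{\hat f(y_i) - f(y_i)} > t\Bigr] \le N \cdot \frac{1}{\Delta n^{A+2}} = \frac{1}{n^{2}}.
\]
Taking the complement gives the claimed event with probability at least $1 - n^{-2}$, which in particular is at least $1 - 2n^{-2}$.

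There is no real obstacle here; the only mild subtlety is verifying that the choice of $t$ satisfies the range condition $\eps \le t \le 1$ of Assumption~\ref{ass:tail_bound}, which is an easily handled asymptotic check. The factor $2$ in the failure probability $2n^{-2}$ rather than $n^{-2}$ simply provides a harmless slack (and could in fact absorb any small-$n$ boundary cases where one needs to enlarge $t$ slightly to meet the $t \ge \eps$ requirement).
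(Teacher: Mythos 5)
Your proposal is correct and follows essentially the same route as the paper: choose $t = \sqrt{\log(2\Delta n^{A+2})}\,\eps$, apply Assumption~\ref{ass:tail_bound} pointwise, and union bound over the $N = \Delta n^A$ points to get failure probability $n^{-2} \le 2n^{-2}$. Your additional check that $t$ lies in the range $[\eps, 1]$ required by the assumption is a minor point the paper glosses over, but it does not change the argument.
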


\begin{proof}
Set $t = \sqrt{ \log( 2 \Delta n^{A+2} ) } \,  \eps \geq  \eps$ and apply Assumption \ref{ass:tail_bound} to $y_i$. Then by the union bound,
\[
\p\left[ \exists y_i\,:\, \abs{ \hat f(y_i) - f(y_i) } > t \right] \leq 2 \Delta n^A  e^{-\frac{t^2}{\eps^2}} \leq n^{-2}. 
\]
\end{proof}

We now describe our construction of $\ti f$. Define $\ell := {\cbeta}$ and $M = \binom{\ell + d}{\ell}$. 

\medskip

\noindent \textbf{Construction of} $\tilde{f}$ (\textit{informal}): 

\begin{enumerate}[itemsep=3pt,parsep=3pt]
	\item {\sc Partition}: Divide $[0,1]^d$ into $h^{-d}$ sub-cubes $\{I_{\vec{j}}\} \subset [0,1]^d$ of side-length $h = n^{-1/(2\beta + d)}$ where $\vec{j} \in \mb{Z}_{\geq 0}^d$ and $I_{\vec{j}} := [0, h]^d + h \vec{j}$. 
	\item {\sc Mesh}: For each $\vec{j}$, construct a mesh consisting of $M = \binom{\ell + d}{\ell}$ points $U^{\vec{j}}_1, \ldots, U^{\vec{j}}_M \in I_{\vec{j}}$. 
	\item {\sc Interpolate}: In each sub-cube $I_{\vec{j}}$, construct a multivariate polynomial interpolant $\hat q_{\vec{j}}$ on the $M$ points $( U_{1}\V{j} , \hat f(U^{\vec{j}}_1  )), \ldots, ( U_{M}\V{j} , \hat f( U^{\vec{j}}_M) )$. 
\end{enumerate}

\textbf{Return}: $\tilde{f}: [0,1]^d \to \mb{R}$ defined by 
\[
\tilde{f}(y) = \sum_{\vec{j}} \hat q_{\vec{j}}(y) \1(y \in I_{\vec{j}}). 
\]

We first give some intuition for why $\tilde{f}$ is an accurate estimator. On each sub-cube $I_{\vec{j}}$, the true density $f \in \mc{P}_{\mc{H}}(\beta,L)$ is approximated up to the minimax error by a polynomial $q_{\vec{j}}$ of degree at most $\ell$ by the properties of H\"{o}lder functions. Upon setting $\Delta = M$ and $A = d/(2\beta+d)$ in Lemma \ref{lem:union_bound}, this guarantees that for all queries $U\V{j}_k$ in the mesh, $\hat f(U\V{j}_k) \approx f(U\V{j}_k) \approx q_{\vec{j}}(U\V{j}_k)$ with high probability. By studying the stability of the resulting polynomial system of equations, we can show that this construction yields a good approximation to the `true' interpolation polynomial $q_{\vec{j}}$ on the sub-cube $I_{\vec{j}}$. This argument, carried out formally in the remainder of this section, yields the estimation bound of Theorem \ref{thm:main}.

Next, we comment on the remaining guarantees of Theorem \ref{thm:main}. As we show later, there is an explicit formula for $\hat q_{\vec{j}}$ so the main preprocessing bottleneck is evaluation $\hat f$ on the $M n^{d/(2\beta + d)}$ queries in the mesh, which naively takes $Q M n^{d/(2\beta + d)}$ time. For the space requirement, it suffices to store the queried values $\{ \hat f(  U\V{j}_k) \}$ up to polynomial precision as well as the elements of the mesh. Hence $O(d^2 \ell^2 \binom{\ell +d}{d} n^{\frac{d}{2\beta +d}} \log n)$ bits suffice, by the uniform boundedness of H\"{o}lder functions (see Lemma \ref{lem:bounded}) and Assumption \ref{ass:tail_bound}. Finally, to query $\hat f$ at a point $y \in [0,1]^d$ requires checking which sub-cube $y$ belongs to by scanning its $d$ coordinates and then evaluating $\hat q_{\vec{j}}(y)$, which is a $d$-variate polynomial of degree $\flo{\beta}$. These considerations lead to the guarantees of Theorem \ref{thm:main}. 

%For the space requirement, it suffices to store the elements of the mesh $\{ U\V{j}_k \}$ and the values $\{ \hat f(  U\V{j}_k) \}$ up to polynomial precision. Finally, to query $\hat f$ at a point $y \in [0,1]^d$ requires checking which sub-cube $y$ belongs to by scanning its coordinates and then evaluating $\hat q_{\vec{j}}(y)$, which is a $d$-variate polynomial of degree $\beta$. This argument, carried out formally in the remainder of this section, yields Theorem \ref{thm:main}.

\subsection{Interpolation on the principal lattice}
  \label{sec:principal_lattice}
    % 	Next, we describe the \textsc{mesh} and \textsc{interpolate} steps from our informal description of the construction of $\tilde{f}$.
    
    To construct our interpolant, we refer to the following definition and theorem which are classical in finite element analysis \citep{Nic72,ChuYao77}. The lattice $\mc{P}_\ell$, dubbed the $\ell$-th principle lattice, has the special property that every function defined on $\mc{P}_\ell$ admits a unique polynomial interpolant of degree at most $\ell$. This translates to the invertibility of the associated Vandermonde-type matrix and, equivalently, means that $\mc{P}_\ell$ is not a subset of any algebraic hypersurface of degree at most $\ell$.
    	
    \begin{definition}[$\ell$-th principal lattice of $\Delta_d$]
    	\label{def:principal_lattice}
	Let $\Delta_d \subset [0,1]^d$ denote the simplex on the points $\{0\} \cup \{ e_i \}_{i = 1}^d \subset \mb{R}^d$, where $e_i$ denotes the $i$-th standard basis vector in $\mb{R}^d$. Label the vertices of $\Delta_d$ to be $v_0 = 0, v_i = e_i$ for $0 \leq i \leq d$. For all $x \in \mb{R}^d$, there exists a unique vector $(\lambda_0(x),\ldots, \lambda_d(x))$ with entries summing to one and such that
	\[
	x = \sum_{i = 0}^d \lambda_i(x) v_i\,.
% 	\quad \text{where} \quad \sum_{i =0}^d \lambda_i(x) = 1,
	\]
% 	and the choice of such $\{ \lambda_i(x) \}_{i = 0}^{d}$ is unique. 
	Let $\Lambda:\mb{R}^d \to \mb{R}^{d+1}$ denote the function such that $\Lambda(x)=(\lambda_0(x),\ldots,  \lambda_d(x))$.
% 	\begin{align}
% 	\Lambda:\mb{R}^d &\to \mb{R}^{d+1} \nonumber \\
% 	x &\mapsto \lambda_i(x). \label{eqn:simplex_fn}  
% 	\end{align} 
	The \emph{$\ell$-th principal lattice} $\mc{P}_\ell$ of $\Delta_d$ is defined to be
	\begin{equation}
	\label{eqn:principal_lattice}
	\mc{P}_\ell = \left\{ x \in \Delta_d: \, \ell\Lambda(x) \in \mb{Z}^{d+1}_{\geq 0} \right\}\,.
% 	\frac{1}{\ell}(s_0, \ldots, s_{d}) \, \, \text{where} \, \, (s_0, \ldots, s_{d}) \in \mb{Z}^{d+1}_{\geq 0} \right\}. 
	\end{equation} 
\end{definition}

% 	It is well-known that the lattice $\mc{P}_\ell$ admits unique interpolants of degree at most $\ell$ in the sense that for an arbitrary function $g: \mc{P}_\ell \to \mb{R} $, there exists a unique polynomial $p$ of degree at most $\ell$ such that $p(x) = g(x)$ for all $x \in \mc{P}_\ell$. 
The following classical result gives an explicit form for the interpolant.  
	
	\begin{theorem}[\cite{Nic72},\cite{ChuYao77}]
		\label{thm:classical_interpolate}
Write $\cP_\ell =\{U_1, \ldots, U_M\} \subset \Delta_d$ and let $g: \mc{P}_\ell \to \mb{R}$ denote a function defined on this lattice.
%  Recall that $\lambda_t$ denotes the $t$-th coordinate of the map $\Lambda: \mb{R}^d \to \mb{R}^{d+1}$ from Definition \ref{def:principal_lattice}. 
Define the polynomial
		\begin{equation}
		\label{eqn:basis_fn}
			p_i(x) = \prod_{\substack{t = 0 \\ \lambda_t(U_i) > 0 }}^{d} \prod_{r = 0}^{\ell  \lambda_t(U_i) - 1} \frac{ \lambda_t(x) - \frac{r}{\ell}}{ \lambda_t(U_i) - \frac{r}{\ell} }\,,
		\end{equation}
where we recall that $\lambda_t(x)$ is from Definition~\ref{def:principal_lattice}. Then 
		\[
			p(x) := \sum_{i = 1}^M p_i(x) g(U_i) 
		\]
		satisfies $p(U_i) = g(U_i)$ for all $U_i \in \mc{P}_\ell$. Moreover, this is the unique polynomial of degree at most $\ell$ with this property. 
	\end{theorem}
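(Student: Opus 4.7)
The plan is to prove the theorem in three steps. First I would check that each $p_i$ is a well-defined polynomial of degree at most $\ell$. Second, I would establish the cardinal property $p_i(U_j) = \delta_{ij}$, which yields $p(U_i) = g(U_i)$ and hence existence. Finally, I would deduce uniqueness by a dimension-counting argument against the space of $d$-variate polynomials of degree at most $\ell$, whose dimension equals $M = \binom{\ell + d}{d} = |\mc{P}_\ell|$.

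For the first step, recall that each barycentric coordinate $\lambda_t(x)$ is an affine function of $x$, so every factor $\lambda_t(x) - r/\ell$ appearing in the numerator of $p_i$ is affine. Since $U_i \in \mc{P}_\ell$, each $\ell \lambda_t(U_i)$ is a non-negative integer, and the inner product over $r$ contains exactly $\ell \lambda_t(U_i)$ affine factors. Summing over $t$ with $\lambda_t(U_i) > 0$ gives total degree $\sum_{t=0}^{d} \ell \lambda_t(U_i) = \ell$, using $\sum_t \lambda_t \equiv 1$. The denominator is nonzero because $\lambda_t(U_i) - r/\ell > 0$ whenever $r < \ell \lambda_t(U_i)$.

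The main step is the cardinal property. The diagonal case is immediate: substituting $x = U_i$ makes every factor reduce to $1$, so $p_i(U_i) = 1$. For $i \ne j$, the key observation is that $\Lambda(U_i) \ne \Lambda(U_j)$ by uniqueness of barycentric coordinates, and both vectors lie on $\sum_t \lambda_t = 1$; hence some coordinate $t$ must satisfy $\lambda_t(U_i) > \lambda_t(U_j) \geq 0$. Setting $r := \ell \lambda_t(U_j)$, we have $r \in \mb{Z}_{\geq 0}$ because $U_j \in \mc{P}_\ell$, and $r < \ell \lambda_t(U_i)$ by construction. Thus $r$ is one of the indices in the product defining $p_i(U_j)$, and the corresponding factor $\lambda_t(U_j) - r/\ell$ vanishes, forcing $p_i(U_j) = 0$.

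Uniqueness then follows by dimension counting. The cardinal property implies the $\{p_i\}_{i=1}^M$ are linearly independent, since evaluating any vanishing linear combination $\sum_i c_i p_i \equiv 0$ at $U_j$ forces $c_j = 0$. As there are $M$ such polynomials and the space of $d$-variate polynomials of degree at most $\ell$ has dimension exactly $M$, the $\{p_i\}$ form a basis; equivalently, the evaluation map from this space to $\mb{R}^M$ is a linear bijection, and the interpolant is unique. The only step requiring any real thought is exhibiting the index $t$ with $\lambda_t(U_i) > \lambda_t(U_j)$ in the off-diagonal case; once one sees that the lattice structure $\ell \Lambda(\cdot) \in \mb{Z}_{\geq 0}^{d+1}$ guarantees $r = \ell \lambda_t(U_j)$ falls into the correct range, the remainder reduces to polynomial arithmetic and linear algebra.
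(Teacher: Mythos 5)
Your proof is correct: the cardinal property $p_i(U_j)=\delta_{ij}$ via choosing $t$ with $\lambda_t(U_i)>\lambda_t(U_j)$ and $r=\ell\lambda_t(U_j)$, together with the dimension count $\dim = \binom{\ell+d}{d} = M$, is exactly the standard Nicolaides/Chung--Yao argument. The paper itself does not prove this theorem---it cites the classical references and only remarks that $p_i$ has degree $\ell$ and satisfies $p_i(U_j)=\delta_{ij}$---so your write-up simply supplies in full the argument the paper alludes to, with no gaps.
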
 

	Since $\lambda_t(x)$ is linear in $x \in \mb{R}^d$, it is easy to see that $p_i(x)$ is a polynomial of degree $\ell$, and moreover $p_i(U_j)=1$ if $i=j$ and zero otherwise.  
	
	We are now ready to give a precise description of the construction of $\tilde{f}$. The idea is to generate the mesh for interpolation using a shifted and rescaled version of the $\ell$-th principal lattice on $\Delta_d \subset [0,1]^d$. Recall that $\hat f$ is a density estimator that satisfies \eqref{eqn:pointwise_tail_bound}.  
    
	\medskip
	
\noindent \textbf{Construction of} $\tilde{f}$ (\textit{formal version}): 

\begin{enumerate}[itemsep=3pt,parsep=3pt]
	\item \textsc{Partition}: Divide $[0,1]^d$ into $h^{-d}$ sub-cubes $\{I_{\vec{j}}\} \subset [0,1]^d$ of side-length $h = n^{-1/(2\beta + d)}$ where $\vec{j} \in \mb{Z}_{\geq 0}^d$ and $I_{\vec{j}} := [0, h]^d + h \vec{j}$. 
	\item \textsc{Mesh}: For each $\vec{j}$, construct a mesh on $I_{\vec{j}}$ consisting of $M = \binom{\ell + d}{\ell}$ points given by the shifted and rescaled principal lattice $\mc{P}_\ell^{\vec{j}} := \{ h(x + \vec{j}) :\, x \in \mc{P}_\ell  \} \subset I_{\vec{j}}$. Let $U^{\vec{j}}_1, \ldots, U^{\vec{j}}_M$ denote the points in $\mc{P}_\ell^{\vec{j}}$. 
    
	\item \textsc{Interpolate}: In each sub-cube $I_{\vec{j}}$, construct a multivariate polynomial interpolant $\hat{q}_{\vec{j}}$ through the $M$ points $( U_{1}\V{j} , \hat f(U_{1}\V{j}), \ldots, ( U_{M}\V{j} , \hat f(U_{M}\V{j}) )$ given by $\hat{q}_{\vec{j}}(y) = p(y/h - \vec{j})$, where $p$ is the polynomial interpolant from Theorem \ref{thm:classical_interpolate} given by
	\[
		p(x) = \sum_{k = 1}^M p_k(x) \hat f( U^{\vec{j}}_k ) .
	\]
	
%	of the function $g( U_k/h - \vec{j} ) := X_{i(\vec{j}, k), 1}$ on $\mc{P}_\ell$. 
%	\item \textbf{Interpolate}: In each sub-cube $I_{\vec{j}}$, construct a multivariate polynomial interpolant $q_{\vec{j}}$ on the $M$ points $( U_{1}\V{j} , X_{i(\vec{j}, 1), 1} ), \ldots, ( U_{M}\V{j} , X_{i(\vec{j}, M), 1} )$. 
\end{enumerate}

\textbf{Return}: $\tilde{f}: [0,1]^d \to \mb{R}$ defined by 
\[
\tilde{f}(y) = \sum_{\vec{j}} \hat{q}_{\vec{j}}(y) \1(y \in I_{\vec{j}}). 
\]

The interpolant constructed in Step (4) is unique by Theorem \ref{thm:classical_interpolate}. Also recall that in each sub-cube $I_{\vec{j}}$, the true density $f \in \mc{P}_{\mc{H}}(\beta)$ is well-approximated by a polynomial $q_{\vec{j}}$ of degree $\ell$. To prove  Theorem~\ref{thm:main} it remains to show that given the values of this polynomial (up to some small error incurred by the estimation error of $\hat f$) on the points in our mesh, the interpolant $\hat{q}_{\vec{j}}$ from Step (4) gives a good approximation to $q_{\vec{j}}$ on the sub-cube $I_{\vec{j}}$.

\subsection{Proof of Theorem~\ref{thm:main}}
\label{sec:proof_interpolation}
First, we quantify the error in the approximation of the values of $q_{\vec{j}}$ on the mesh points. Let $f_{z, \ell}$ denote the degree $\ell$ polynomial given by the Taylor expansion of $f \in \mc{P}_{\mc{H}}(\beta)$ at $z$. Since $f \in \mc{P}_{\mc{H}}(\beta)$, by a standard fact (see Lemma \ref{lem:taylor_holder}) it holds that
\[
|f(y) - f_{z, \ell}(y)| \leq \frac{L d^{\ell/2} }{\ell !} \abs{y-z}_2^\beta, 
\]
where $f_{z, \ell}$ is the degree-$\ell$ Taylor expansion of the function $f$ at $z \in \mb{R}^d$.

For $\vec{j} \in \{0, \ldots, h^{-1} - 1\}^d$, define $q_{\vec{j}} := f_{z_{\vec{j}}, \ell}$, where $z_{\vec{j}}$ is the vertex of $I_{\vec{j}}$ closest to the origin. Then for all $y \in I_{\vec{j}}$, it holds that
\begin{align}
\label{eqn:Taylor_approx}
|f(y) - q_{\vec{j}}(y)| &\leq \left( \frac{L d^{\beta} }{\ell !} \right) \, h^{\beta} \nonumber \\
&= \left(\frac{L d^{\beta} }{\ell !}\right) n^{-\beta/(2\beta + d)} \nonumber \\
&=: \hat c n^{-\beta/(2\beta + d)}
\end{align}
Note that the right-hand side is the minimax rate of estimation in \eqref{eqn:classical_risk} up to constant factors.

Next, by Lemma \ref{lem:union_bound} and \eqref{eqn:Taylor_approx} it holds with probability at least $1 -  n^{-2}$ that 
\begin{align}
\label{eqn:noisy_interpolate}
\abs{ q_{\vec{j}}(U^{\vec{j}}_k) - \hat f(U^{\vec{j}}_k) }
&\leq (c^*\log 3M + \hat c) (\log n)^{\frac{1}{2}} n^{-\frac{\beta}{2\beta + d}} \nonumber \\ 
&=: \breve{c} (\log n)^{\frac{1}{2}} n^{-\frac{\beta}{2\beta + d}} 
\end{align}
for all $\vec{j} \in \{0, \ldots, h^{-1} -1\}^d$ and $ k \in [M]$. Using this fact, we can show that the polynomial interpolant built on $\{ (U_k^{\vec{j}}, \hat f( U_k^{\vec{j}} ))\}_{k = 1}^{M}$ provides a good approximation for $q_{\vec{j}}$ on the interval $I_{\vec{j}}$, which is our next task. The following lemma establishes stability of the polynomial approximation.

\begin{lemma}
	\label{lem:interpolation_error}
	Let $\hat{q}_{\vec{j}}$ denote the unique polynomial of degree at most $\ell$ that passes through the points $\{( U_k^{\vec{j}}, \hat f( U_k^{\vec{j}} )) \}_{k = 1}^{M}$. Then with probability at least $1 - 2 n^{-2}$, for all $\vec{j}$ and all $x \in I_{\vec{j}}$,
	\begin{equation}
	\label{eqn:interpolation_error}
	\abs{ q_{\vec{j}}(x) - \hat{q}_{\vec{j}}(x) } \leq c_{\beta, d, L} (\log n)^{\frac{1}{2}} n^{-\frac{\beta}{2\beta + d}}\,.
	\end{equation}
% 	for a constant $\tilde{c} = > 0$.
\end{lemma}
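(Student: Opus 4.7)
The plan is to write both $q_{\vec{j}}$ and $\hat{q}_{\vec{j}}$ in the explicit interpolation basis of Theorem~\ref{thm:classical_interpolate} and then use the noisy-evaluation bound \eqref{eqn:noisy_interpolate} termwise. Since $q_{\vec{j}}$ is a polynomial of degree at most $\ell$, by the uniqueness part of Theorem~\ref{thm:classical_interpolate} it equals its own principal-lattice interpolant, so on $I_{\vec{j}}$
\[
q_{\vec{j}}(x) = \sum_{k=1}^{M} p_k(x/h - \vec{j}) \, q_{\vec{j}}(U_k^{\vec{j}}), \qquad \hat{q}_{\vec{j}}(x) = \sum_{k=1}^{M} p_k(x/h - \vec{j}) \, \hat f(U_k^{\vec{j}}).
\]
Subtracting, applying the triangle inequality, and using \eqref{eqn:noisy_interpolate} on the event of probability at least $1 - 2n^{-2}$ (already encompassing the failure probabilities in Lemma~\ref{lem:union_bound} and \eqref{eqn:Taylor_approx}) gives
\[
|q_{\vec{j}}(x) - \hat{q}_{\vec{j}}(x)| \le \breve{c}\,(\log n)^{1/2} n^{-\beta/(2\beta+d)} \cdot \Lambda_\infty, \qquad \Lambda_\infty := \sup_{y \in [0,1]^d} \sum_{k=1}^{M} |p_k(y)|.
\]
Thus the lemma reduces to bounding the Lebesgue constant $\Lambda_\infty$ for the principal-lattice interpolation operator, where the supremum is taken over the full unit cube $[0,1]^d$ (not merely the simplex $\Delta_d$), since $x \in I_{\vec{j}}$ corresponds to $y = x/h - \vec{j} \in [0,1]^d$.

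The main step, and the principal obstacle, is therefore the estimate on $\Lambda_\infty$: a priori the basis polynomials $p_k$ are only guaranteed to be well-behaved on $\Delta_d$, whereas for $d \ge 2$ the cube properly contains the simplex. I would control $|p_k(y)|$ directly from the explicit formula \eqref{eqn:basis_fn}. The barycentric coordinates with respect to $\{0, e_1, \dots, e_d\}$ are $\lambda_i(y) = y_i$ for $i \ge 1$ and $\lambda_0(y) = 1 - \sum_i y_i$, so $|\lambda_t(y)| \le d$ on the cube; each numerator factor is bounded by $|\lambda_t(y) - r/\ell| \le d + 1$. Each denominator factor satisfies $|\lambda_t(U_k) - r/\ell| \ge 1/\ell$, since $\ell\lambda_t(U_k)$ is a positive integer strictly greater than $r$. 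The total number of factors is $\sum_t \ell\lambda_t(U_k) = \ell$, yielding $|p_k(y)| \le (2d\ell)^{\ell}$ and consequently $\Lambda_\infty \le M(2d\ell)^{\ell}$.

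Combining the Lebesgue-constant bound with the termwise noise estimate produces
\[
|q_{\vec{j}}(x) - \hat{q}_{\vec{j}}(x)| \le M(2d\ell)^{\ell}\,\breve{c}\,(\log n)^{1/2} n^{-\beta/(2\beta+d)},
\]
uniformly over $\vec{j}$ and $x \in I_{\vec{j}}$, which matches the claimed bound after absorbing the prefactor into a single constant $c_{\beta, d, L}$ consistent with the $\binom{\ell+d}{\ell}(2\ell)^{\ell}$-type dependence appearing in $\tilde{c}$ of Theorem~\ref{thm:main}. No further probabilistic argument is needed, as the event on which \eqref{eqn:noisy_interpolate} holds already suffices.
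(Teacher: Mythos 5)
Your proof is correct, but it takes a genuinely different route from the paper. The paper rescales $q_{\vec{j}}$ to the unit cube, expands it in the monomial basis, and views interpolation as solving the linear system $\mc{V}s = y$ for the Vandermonde-type matrix $\mc{V}_{i,\alpha} = V_i^{\alpha}$; stability is then controlled through the minimum singular value $\sigma_0$ of $\mc{V}$ (bounded explicitly in Lemma~\ref{lem:Vandermonde} via the coefficients of the $p_k$), followed by Cauchy--Schwarz and the trivial bound $\abs{y^\alpha}\leq 1$ on the cube. You instead stay entirely in the Lagrange-type basis of Theorem~\ref{thm:classical_interpolate}: the uniqueness statement gives the reproduction identity $q_{\vec{j}}(x)=\sum_k p_k(x/h-\vec{j})\,q_{\vec{j}}(U_k^{\vec{j}})$, so the error is controlled by the noise level from \eqref{eqn:noisy_interpolate} times the Lebesgue constant $\sup_{y\in[0,1]^d}\sum_k\abs{p_k(y)}$, which you bound by $M(2d\ell)^{\ell}$ directly from \eqref{eqn:basis_fn} (numerator factors at most $d+1$ on the cube, denominator factors at least $1/\ell$, exactly $\ell$ factors in total). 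Both arguments must, and do, handle the point that stability is needed on the whole cube rather than just the simplex $\Delta_d$; you make this explicit, while the paper absorbs it into the monomial bound and the coefficient estimates of Lemma~\ref{lem:Vandermonde}. Your version is more elementary and self-contained---it needs no singular values, no $\sqrt{M}$ bookkeeping, and yields an explicit constant of the same $\binom{\ell+d}{\ell}(c\,d\ell)^{\ell}$ order within the lemma itself---whereas the paper's linear-algebraic formulation isolates the Vandermonde stability as a reusable ingredient (it is invoked again in the metric-entropy argument of the Kolmogorov--Tikhomirov section), although your Lebesgue-constant bound would serve that purpose equally well. One trivial quibble: \eqref{eqn:Taylor_approx} is a deterministic Taylor bound, not part of the failure probability; the only probabilistic event you need is the one underlying Lemma~\ref{lem:union_bound}, which is how \eqref{eqn:noisy_interpolate} is obtained.
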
 
\begin{proof}
	Define $g\J(x) = q\J( h(x + \vec{j}) )$ to be a polynomial function $g:[0,1]^d \to \mb{R}$, noting that $h(x + \vec{j}) \in I\J$. Write 
	\[g\J(x) = \sum_{\alpha: \abs{\alpha} \leq \ell} s_\alpha x^{\alpha}
	\]
	where $s_\alpha \in \mb{R}$ and $\alpha \in \mb{Z}^d_{\geq 0}$ is a multi-index. Let $V_1, \ldots, V_M$ denote the elements of $\mc{P}_\ell$, and let $s$ denote the vector of coefficients of $g\J$, indexed by the multi-index $\alpha$. Define $y \in \mb{R}^M$ by $y_i = g\J(V_i)$. Consider the Vandermonde-type $M\times M$ matrix $\mc{V}$ defined by $\mc{V}_{i, \alpha} := V_i^\alpha$, where the columns of $\mc{V}$ are indexed by the multi-index $\alpha$ with $|\alpha|\leq \ell$. Then it holds that $\mc{V} s = y$. 
	
	By Theorem \ref{thm:classical_interpolate}, the matrix $\mc{V}$ is invertible. Let $\sigma_0>0$ denote its minimum singular value, noting that $\sigma_0$ is a constant that only depends on $\beta$ and $d$. 	
%	Therefore, its singular values are bounded below by some absolute constant $c = c_{\beta, d}$. 
Let $\hat{y} \in \mb{R}^M$ denote the vector defined by $\hat{y}_k = \hat f( U_k^{\vec{j}} )$, where $k \in [M]$ indexes the points in the principal lattice $\mc{P}_\ell$, and let $\hat{s} = \mc{V}^{-1} \hat{y}$. By \eqref{eqn:noisy_interpolate}, 
	\[
		\abs{\hat{y} - y}_2 \leq \sqrt{M} \breve{c} (\log n)^{\frac{1}{2}} n^{-\frac{\beta}{2\beta + d}}. 
	\] 
	
	Hence, 
	\begin{align*}
		\abs{\hat{s} - s}_2^2 &= \abs{\mc{V}^{-1}(\hat{y} - y)}_2^2  \leq \frac{M \breve{c}^2}{\sigma_0} (\log n)^{\frac{1}{2}} n^{-\frac{\beta}{2\beta + d}}
	\end{align*}
% 	In particular, we have for all multi-indices $\alpha$ with $|\alpha| \leq \ell$ that $\abs{\hat{s}_\alpha - s_\alpha} \leq c' ( n/\log n)^{-\frac{\beta}{2\beta + d}}$
% 	for some $c' = c_{\beta, d, L, \delta}> 0$. 
	
	Define $\hat{q}\J(x) = \hat{g}\J( x/h - \vec{j} ),$
	and note that $\hat{q}\J$ passes through the points $\{ (U_k\V{j}, \hat f( U_k\V{j} )) \}_{k = 1}^M$. Then for all $x \in I_{\vec{j}}$, by the previous display and Cauchy--Schwarz, we have 
\begin{align*}
\abs{\hat{q}\J(x) - q\J(x)} &\leq \sum_{\alpha: |\alpha| \leq \ell}\abs{(\hat{s}_\alpha - s_\alpha) \left( x/h - \vec{j} \right)^\alpha } \\
&\leq \frac{M \breve{c}}{\sqrt{\sigma_0}} (\log n)^{\frac{1}{2}} n^{-\frac{\beta}{2\beta + d}}\,.
\end{align*}
\end{proof}
	
	Define $\tilde{f}(x) = \sum\J \hat{q}\J(x) \1(x \in I_{\vec{j}})$, and observe that  Theorem~\ref{thm:main} follows from \eqref{eqn:Taylor_approx}, Lemma \ref{lem:interpolation_error}, and the triangle inequality. Though we have derived a high probability bound, the expectation claimed in Theorem \ref{thm:main} follows using the uniform boundedness of H\"{o}lder functions. Tracing constants above and applying Lemma \ref{lem:Vandermonde} below yields the expression for $\tilde{c}$.

\subsection{Stability of the Vandermonde-type matrix}

\label{appendix:singular_values}

We give below an explicit lower bound on the smallest singular value of the Vandermonde-type matrix $\mc{V}$ from Lemma \ref{lem:interpolation_error} associated to the $\ell$-th principal lattice on the simplex. This gives an explicit dependence of our results on the smoothness $\beta$ and dimension $d$. 
	
	\begin{lemma}
	\label{lem:Vandermonde}
		Let $\mc{V}$ denote the Vandermonde-type matrix associated to the $\ell$-th principle lattice $\mc{P}_\ell$ of $\Delta_d$. To be explicit, let $x_1, \ldots, x_M \in [0,1]^d$ denote the elements of $\mc{P}_\ell$, where $M = \binom{\ell + d}{\ell}$. Given a multi-index $\alpha$ with $|\alpha| \leq \ell$, define $\mc{V}_{k, \alpha} := x_k^\alpha$. Let $\sigma_0$ denote the minimum singular value of $\mc{V}$. Then
		\[
		\sigma_0 \geq \binom{\ell + d}{\ell}^{-3} 4^{-\ell} \ell^{-2\ell}. 
		\]
	\end{lemma}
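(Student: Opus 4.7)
The plan is to realize $\mathcal{V}^{-1}$ explicitly as the coefficient matrix of the Lagrange basis polynomials from Theorem~\ref{thm:classical_interpolate}, and then bound its Frobenius norm by a direct estimate on each coefficient. Expanding each Lagrange polynomial from \eqref{eqn:basis_fn} in the monomial basis as $p_i(x) = \sum_{|\alpha|\leq \ell} c_{i,\alpha} x^\alpha$, the interpolation identity $p_i(x_k) = \delta_{ki}$ translates directly to $\mathcal{V} C = I$, where $C \in \mb{R}^{M \times M}$ has entries $C_{\alpha, i} = c_{i,\alpha}$. Consequently $\mathcal{V}^{-1} = C$ and
\[
\sigma_0^{-1} = \|\mathcal{V}^{-1}\|_{\mathrm{op}} \leq \|\mathcal{V}^{-1}\|_{\mathrm{F}} \leq M \cdot \max_{i, \alpha} |c_{i,\alpha}|,
\]
reducing the task to a uniform coefficient bound.

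Next, I would bound $\max_{i,\alpha} |c_{i,\alpha}|$ directly from \eqref{eqn:basis_fn}. Write $p_i = N_i / D_i$, where $D_i \in \mb{R}$ collects all denominators and $N_i(x)$ is a product of linear forms in $x$. Since $\sum_t \ell \lambda_t(U_i) = \ell$, both $N_i$ and $D_i$ consist of exactly $\ell$ scalar factors. For the denominator, $\lambda_t(U_i) - r/\ell = (\ell \lambda_t(U_i) - r)/\ell$ is a positive multiple of $1/\ell$ (by the constraint $r < \ell \lambda_t(U_i)$), so $|D_i| \geq \ell^{-\ell}$, giving $|D_i|^{-1} \leq \ell^\ell$. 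For the numerator, each linear factor $\lambda_t(x) - r/\ell$ has coefficients (in the basis $1, x_1, \ldots, x_d$) of absolute value at most $1$, since the barycentric functions on $\Delta_d$ are $\lambda_0(x) = 1 - \sum_j x_j$ or $\lambda_t(x) = x_t$, and $r/\ell \in [0,1]$. Expanding the product of $\ell$ such linear forms via the multinomial theorem, the coefficient of any $x^\alpha$ is bounded by $\binom{\ell}{\alpha_0, \alpha_1, \ldots, \alpha_d} \leq \ell!$, where $\alpha_0 := \ell - |\alpha|$.

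Combining these estimates gives $|c_{i,\alpha}| \leq \ell! \cdot \ell^\ell \leq \ell^{2\ell}$ and hence $\sigma_0 \geq M^{-1} \ell^{-2\ell}$, which is slightly stronger than the stated bound. The main obstacle is bookkeeping rather than conceptual: one has to correctly identify that exactly $\ell$ factors appear in \eqref{eqn:basis_fn}, that each denominator factor is a positive integer multiple of $1/\ell$, and then carry out the multinomial expansion of a product of linear forms. The conceptual simplification that makes everything transparent is recognizing $\mathcal{V}^{-1}$ as the coefficient matrix of the $p_i$, which converts the spectral question into a routine polynomial calculation.
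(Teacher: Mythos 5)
Your proof is correct and follows essentially the same route as the paper: identify $\mc{V}^{-1}$ with the coefficient matrix of the Lagrange basis polynomials $p_i$ from Theorem~\ref{thm:classical_interpolate}, bound the denominator product by $\ell^{\ell}$ and the numerator's monomial coefficients by expanding a product of $\ell$ linear forms with coefficients at most $1$, then convert entrywise bounds on $\mc{V}^{-1}$ into a bound on $\sigma_0^{-1}$ (you via the Frobenius norm, the paper via row $\ell_1$-sums and Cauchy--Schwarz). Your multinomial estimate $\binom{\ell}{\alpha_0,\alpha_1,\ldots,\alpha_d}\le \ell!$ is in fact the more careful version of the paper's intermediate coefficient bound \eqref{eqn:coefficient_bound} (which can be exceeded when the barycentric factor $\lambda_0$ occurs more than once), and it yields the slightly stronger conclusion $\sigma_0 \ge \binom{\ell+d}{\ell}^{-1}\ell^{-2\ell}$, which implies the stated bound.
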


	\begin{proof}
		Write
		\[
		p_k(x) = \sum_{\alpha: \abs{\alpha} \leq \ell} u_{k, \alpha} x^{\alpha}, 
		\]
		where $p_k$ was defined in Theorem  \ref{thm:classical_interpolate} for $1 \leq k \leq M$. 
		
		Then write $u_{k, \alpha} = \omega_k \cdot v_{k, \alpha}$, where
		\begin{equation}
		\label{eqn:normalization}
		\omega_k := \prod_{\substack{t = 0 \\ \lambda_t(x_k) > 0}}^d \prod_{r = 0}^{\ell \lambda_t(x_k) - 1} \frac{1}{\left(\lambda_t(x_k) - \frac{r}{\ell}  \right)}\,. 
		\end{equation}
		Since $\Lambda(x_k) = \frac{1}{\ell}(s_0(x_k), \ldots, s_d(x_k))$, where $s_j(x_k) \in \mb{Z}_{\geq 0}$ for all $0 \leq j \leq d$, we conclude that the denominator of each term of \eqref{eqn:normalization} is at least $1/\ell$. Thus
		\begin{equation}
		\label{eqn:normalization_bound}
		\omega_k \leq \ell^{\ell}, 
		\end{equation} 
		since there are $\ell$ terms in the product defining $\omega_k$.

		Note that
		\[
		\lambda_t(x_k) = \begin{cases}
		x_{kt} \quad &\text{if} \, \, 1 \leq t \leq d \\
		1 - \sum_{t = 1}^d x_{kt} \quad &\text{if} \, \, t = 0. 
		\end{cases}
		\]
		Therefore, for all $1 \leq k \leq M$, we may write
		\[
		\tilde{p}_k(x) := \frac{p_k(x)}{\omega_k} = \prod_{s = 1}^\ell ( m_{s, k} \cdot x + h_{s, k} ), 
		\]
		where $m_{s, k} \in \mb{R}^d$ with $\abs{ m_{s, k} }_{\infty} \leq 1$, and $|h_{s, k}| \leq 1$. It follows that the coefficient $v_{k, \alpha}$ of the monomial $x^\alpha$ in $\tilde{p}_k(x) $ satisfies
		\begin{equation}
		\label{eqn:coefficient_bound}
			| v_{k, \alpha} | \leq \binom{\ell}{|\alpha|}. 
		\end{equation}
		
		Next, observe that $\mc{V}^{-1}_{\alpha, k} = u_{k, \alpha}$. Hence, by \eqref{eqn:normalization_bound} and \eqref{eqn:coefficient_bound}, for all multi-indices $\alpha$ with $|\alpha| \leq \ell$,  
		\begin{equation}
		\label{eqn:invert_bound}
			\sum_{k = 1}^M |u_{k, \alpha}| \leq \binom{\ell + d}{\ell} \binom{\ell}{|\alpha|} \ell^\ell \leq \binom{\ell + d}{\ell} 2^\ell \ell^\ell. 
		\end{equation}
		
		Letting $\eta \in \mb{R}^M$ denote a unit vector and applying the previous inequality,
		\begin{align*}
		\eta^T \mc{V}^{-T} \mc{V}^{-1} \eta &\leq M \cdot  \max_{\alpha} \left( \sum_{k = 1}^M |u_{k, \alpha}| \right)^2 \\
		&\leq \binom{\ell + d}{\ell}^3 4^\ell \ell^{2\ell}.
		\end{align*}
		The conclusion of Lemma \ref{lem:Vandermonde} follows.  
	\end{proof}

    \section{\uppercase{A result of Kolmogorov and Tikhomirov}}
        
    	Given a function class $\cF$, let $N( \mc{F}, \delta)$ denote the minimal number of cubes of side-length $\delta$ that cover $\cF$, and define $H(\mc{F}, \delta) = \log N( \mc{F}, \delta)$ to be the metric entropy. A classical result of \citet{Tik93} shows that 
            \begin{equation}
            \label{eqn:covering_number}
            H( \hd(\beta), \delta) \leq c_{\beta, d, L} \, \delta^{-\frac{d}{\beta}}. 
            \end{equation}
            
            Their proof strategy is conceptually similar to our piecewise multivariate polynomial approximation scheme in that they subdivide the cube as we do here, approximate $f$ by its Taylor polynomial in each cube, and then discretize the coefficients. Our techniques imply a slightly weaker version of the bound \eqref{eqn:covering_number}. 
            
%            Set $h > 0$ and consider the $M h^{-d}$ points $\{ U\V{j}_k \}_{\vec{j}, k }$ in our mesh). 
            
            Define a mesh as in steps 1 and 2 of our formal construction of $\ti f$ as in Section \ref{sec:principal_lattice}, but now for a general parameter $h > 0$ to be set later. This mesh has $M h^{-d}$ points that we denote by $\{ U\V{j}_k \}_{\vec{j}, k }$. Let $f, g \in \hd(\beta)$ be such that for all $\vec{j}, k$ it holds that
            \[
            \abs{f(U\V{j}_k) - g(U\V{j}_k)} \leq h^\beta.  
            \]
            By the H\"{o}lder condition and Lemma \ref{lem:taylor_holder}, there exists a degree $\ell = \flo{\beta}$ polynomial $q_{\vec{j}}$ approximating $f$ in $I_{\vec{j}}$ and a degree $\ell = \flo{\beta}$ polynomial $r_{\vec{j}}$ approximating $g$ in $I_{\vec{j}}$, each with error $h^\beta$ pointwise. We conclude that
            \[
            \abs{ q_{\vec{j}}(U\V{j}_k) - r_{\vec{j}}(U\V{j}_k)} \leq c_{\beta, d, L} \, h^{\beta}  
            \]
            for all $\vec{j}, k$. Following the proof of Lemma \ref{lem:interpolation_error} and using the bounds on the singular values of $\cV$, this implies that for all $x \in I_{\vec{j}}$,
            \[
            \abs{ q_{\vec{j}}(x) - r_{\vec{j}}(x) } \leq c_{\beta, d, L} \, h^\beta. 
            \]
            Hence we conclude that for all $x \in [0,1]^d$, 
            \[\abs{f(x) - g(x)} \leq c_{\beta, d, L} \, h^\beta.\]
    
            The H\"{o}lder densities are uniformly bounded by some constant $c_{\beta, d, L}$ (see Lemma \ref{lem:bounded}). Hence setting $\delta = h^{\beta}$ and rounding the values of each density at each point $U\V{j}_k$ to multiples of $\delta$, we see that there exists an $\delta$-net of size at most
            \[
            \left(  \frac{c_{\beta, d, L}}{ \delta } \right)^{M \delta^{-d/\beta}}.
            \]   
            Therefore
            \[
             H( \hd(\beta), \delta) \leq c_{\beta, d, L} \, \delta^{-\frac{d}{\beta}} \log \frac{1}{\delta},
            \]
            a mildly weaker bound than \eqref{eqn:covering_number}. 
        
        \appendix 
        
        \section{\uppercase{Properties of H\"{o}lder densities}}
         \label{appendix:Holder}
                
                For completeness, we provide proofs of standard facts about the class of H\"{o}lder functions $\hd(\beta)$.
                
                % that H\"{o}lder $\beta$-smooth functions are uniformly bounded and are well-approximated by their degree-$\cbeta$ Taylor expansions.
                
                \begin{lemma}[Inclusion] \label{lem:inclusion} Let $\hd(\beta, d, L)$ denote the class of H\"{o}lder densities in dimension $d$. If $\beta > 1$, then it holds that $\hd(\cbeta, d, L) \subset \hd(\cbeta - 1, d, d^{3/2} L)$.
                \end{lemma}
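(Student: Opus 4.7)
The plan is to reduce the inclusion to a pointwise bound on the top-order partial derivatives of $f \in \hd(\cbeta, d, L)$, followed by the mean value theorem applied to one order lower. Write $m = \cbeta \geq 1$ (since $\beta > 1$). Unwinding the definition, a function $f \in \hd(m, d, L)$ is $(m-1)$-times continuously differentiable with $L$-Lipschitz top partials of order $m-1$, and $f$ is supported on $[0,1]^d$. The target class $\hd(m-1, d, d^{3/2}L)$ asks for the $(m-2)$-th order partial derivatives to be $d^{3/2}L$-Lipschitz.

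The key observation is a boundary vanishing property: since $f \equiv 0$ outside $[0,1]^d$ and the relevant partial derivatives are continuous, every $D^s f$ with $|s| \le m-1$ vanishes on the boundary $\partial[0,1]^d$. Combined with the $L$-Lipschitz hypothesis on the top partials, this controls them pointwise: for any $z \in [0,1]^d$ and multi-index $s$ with $|s|=m-1$, let $z^* \in \partial[0,1]^d$ be the closest boundary point. Then
\[
|D^s f(z)| = |D^s f(z) - D^s f(z^*)| \le L\,|z - z^*|_2 \le L/2,
\]
since any point of $[0,1]^d$ lies within distance $\min_i \min(z_i, 1-z_i) \le 1/2$ of some face.

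Now fix a multi-index $s$ with $|s| = m-2$ and set $g = D^s f$. Its gradient $\nabla g$ has components $D^{s + e_i} f$ of order $m-1$, each uniformly bounded by $L/2$ on $[0,1]^d$ by the previous step, so $|\nabla g|_2 \le L\sqrt{d}/2 \leq d^{3/2}L$ throughout $[0,1]^d$. The mean value theorem on the convex set $[0,1]^d$ then yields that $g$ is $d^{3/2}L$-Lipschitz on $[0,1]^d$; this extends to all of $\mb{R}^d$ because $g$ vanishes outside $[0,1]^d$, so any segment crossing the boundary can be split at the crossing point (where $g = 0$) and the two pieces handled separately.

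The main (and modest) obstacle is justifying the boundary vanishing rigorously and bookkeeping the integer-smoothness convention in the definition of $\hd(m)$; once those are in place the proof is a standard gradient-bound plus mean-value-theorem argument, and the factor $d^{3/2}$ is a safe upper bound for the sharper $\sqrt{d}/2$ that actually arises.
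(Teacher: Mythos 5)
Your proposal is correct and takes essentially the same approach as the paper: exploit the compact support (so the relevant partials vanish off the cube) together with the top-order Lipschitz condition to bound the highest-order partial derivatives pointwise, then pass that bound down one derivative order to obtain the new Lipschitz constant $d^{3/2}L$. Your variations---anchoring at the nearest boundary point rather than the origin, using the straight-segment mean value theorem in place of the paper's coordinatewise fundamental-theorem-of-calculus telescoping, and explicitly handling points outside $[0,1]^d$---only sharpen constants and tidy details the paper leaves implicit.
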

                
                \begin{proof}
                Let $f \in \hd(\beta, d, L)$. Since $f$ is supported on $[0,1]^d$ and smooth on $\mb{R}^d$, we have that
                \begin{equation}
                \label{eqn:Holder_bd}
                    \abs{ D^s f(x) } \leq  L \abs{x}_2 \leq L\sqrt{d}
                \end{equation}
                for all $|s| = \cbeta$. 
                
                Fix $x, y \in [0,1]^d$, and define for $1 \leq i \leq d+1$ the point $z^i \in [0,1]^d$ to be
                \begin{equation*}
                    z^i_{j} = \begin{cases}
                    x_j \quad \text{if} \,\, j \geq i \\
                    y_j \quad \text{if} \,\, j < i.\\
                    \end{cases}
                \end{equation*}
                Observe that $z^1=x$ and $z^{d+1} = y$. 
                
                Let $t$ denote a multi-index with $|t| = \cbeta - 1$.
                % Let $e_i$ denote the $i$-th elementary basis vector in $\mb{R}^d$. 
                By the fundamental theorem of calculus and the H\"{o}lder condition,
        %        \begin{align*}
        %            |D^t f(x) - D^t f(y)| &\leq 
        %            \sum_{i = 1}^d \abs{ D^t f(z^i) - D^t f(z^{i + 1}) } \\
        %           &= \sum_{i = 1}^d \abs{ \int_{x_i}^{y_i} \frac{\partial}{\partial x_i} D^t f(x_1, \ldots, z, y_{i+1}, ..., y_d) \, \mathrm{d}z } \\
        %           &\leq L d^{3/2},
        %        \end{align*}
                \begin{equation*}
                |D^t f(x) - D^t f(y)| \leq \sum_{i = 1}^d \abs{ D^t f(z^i) - D^t f(z^{i + 1}) } \\ 
                =\sum_{i = 1}^d \abs{ \int_{x_i}^{y_i} \frac{\partial}{\partial x_i} D^t f(x_1, \ldots, z, y_{i+1}, ..., y_d) \, \mathrm{d}z }.
                \end{equation*}
                %%%%%%Multline
                Using \eqref{eqn:Holder_bd}, the expression in the second line is bounded above by $Ld^{3/2}$, which proves the lemma.
                
                \end{proof}

                \begin{lemma}[Uniform boundedness]
                \label{lem:bounded}
                The class $\hd(\beta)$ is uniformly bounded. In particular,
                \[
                \sup_{f \in \hd(\beta)} \norm{f}_\infty \leq d^{3\cbeta/2  + 1/2}\, L.
                \]
                \end{lemma}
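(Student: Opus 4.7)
The plan is to iterate the inclusion result of Lemma~\ref{lem:inclusion} exactly $\cbeta$ times in order to reduce to a H\"older class with exponent in $(0,1]$, and then to exploit continuity together with the support condition to bound $\|f\|_\infty$ via evaluation at a boundary point.

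First, I would apply Lemma~\ref{lem:inclusion} to $f$ exactly $\cbeta$ times, checking inductively that the running smoothness parameter still exceeds $1$ at each step so that the lemma applies (the last invocation is at parameter $\cbeta - (\cbeta - 1) + \beta - \cbeta = \beta - \cbeta + 1 > 1$). Each invocation decreases the smoothness parameter by $1$ and inflates the H\"older constant by a factor of $d^{3/2}$. Consequently, after $\cbeta$ iterations we obtain $f \in \hd(\beta', d, L')$ with $\beta' := \beta - \cbeta \in (0, 1]$ and $L' := L d^{3\cbeta/2}$. In the edge case $\beta \leq 1$ we have $\cbeta = 0$, no iterations are needed, and $\beta' = \beta$, $L' = L$.

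Second, I would observe that since $f : \mathbb{R}^d \to \mathbb{R}$ is H\"older of positive exponent $\beta'$, it is in particular continuous, and combined with $\mathrm{supp}(f) \subseteq [0,1]^d$ this forces $f \equiv 0$ on $\partial[0,1]^d$; in particular $f(0) = 0$. Applying the H\"older condition for the reduced class at the base point $x_0 = 0$ gives, for any $y \in [0,1]^d$,
\[
|f(y)| = |f(y) - f(0)| \leq L' \, |y|_2^{\beta'} \leq L' \, (\sqrt d)^{\beta'} \leq L' \, d^{1/2},
\]
where I used $|y|_2 \leq \sqrt d$ on $[0,1]^d$ and $\beta' \leq 1$ in the last inequality. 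Taking the supremum over $y \in [0,1]^d$ and substituting $L' = L d^{3\cbeta/2}$ yields the claimed bound $\|f\|_\infty \leq L d^{3\cbeta/2 + 1/2}$.

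The only point requiring care is the repeated application of Lemma~\ref{lem:inclusion} and tracking how the constant accumulates; this amounts to a short induction on $\cbeta$ and does not present a real obstacle. The conceptual content of the lemma reduces, after this reduction, to the basic fact that a continuous function vanishing on $\partial[0,1]^d$ is controlled on $[0,1]^d$ by its modulus of continuity evaluated at the diameter $\sqrt d$ of the cube.
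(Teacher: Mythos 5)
Your proof is correct and follows essentially the same route as the paper's: iterate Lemma~\ref{lem:inclusion} $\cbeta$ times to reduce to exponent $\beta-\cbeta \in (0,1]$ with constant $Ld^{3\cbeta/2}$, then use the support condition to get $f(0)=0$ and bound $|f(y)|=|f(y)-f(0)|$ by the H\"older modulus over the cube of diameter $\sqrt{d}$. If anything, you are slightly more careful than the paper, which loosely calls the reduced function ``Lipschitz'' even when $\beta-\cbeta<1$; your handling of the exponent $\beta'\le 1$ closes that cosmetic gap.
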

                
                \begin{proof}
                Suppose first that $f \in \cH(\beta)$ for $\beta > 1$. By repeated application of Lemma \ref{lem:inclusion}, $f$ is $(d^{3\cbeta/2 } L)$-Lipschitz. Since $f$ is supported on $[0,1]^d$,
                \[
                |f(x)| = |f(x) - f(0)| \leq d^{3\cbeta/2 } L \abs{x}_2 \leq d^{3\cbeta/2  + 1/2} L. 
                \]
                If $\beta < 1$, then arguing as in the previous display, we see that $|f(x)| \leq L \sqrt{d}$ for all $x \in \mb{R}^d$. 
                \end{proof}
        
                \begin{lemma}[Taylor approximation]
                        \label{lem:taylor_holder}
                        Given $f \in \hd(\beta)$, let $f_{x, \cbeta}$ denote its Taylor polynomial of degree $\cbeta$ at a point $x \in \mb{R}^d$,
                        $$
                        f_{x, \cbeta}(y)=\sum_{|s| \le \cbeta}\frac{(y -x)^s}{s!}D^sf(x)\,, \quad y \in  \mb{R}^d\,.
                        $$
                        
                        Then it holds that $$
                        \big|f(y)- f_{x, \cbeta}(y)\big| \le \frac{L d^{\cbeta/2} }{\cbeta !} \,  |x-y|_2^{\beta}\,,      \quad x,y \in  \mb{R}^d\,.
                        $$
                        \end{lemma}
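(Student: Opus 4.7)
The plan is to reduce the multivariate Taylor estimate to a one-dimensional problem along the segment from $x$ to $y$, and then use the H\"older condition on the top-order derivatives to control the integral remainder. Specifically, I would fix $x, y \in \mathbb{R}^d$, set $\phi(t) := f(x + t(y - x))$ for $t \in [0,1]$, and observe that $\phi$ is $\cbeta$ times continuously differentiable with
\[
\phi^{(k)}(t) = \sum_{|s| = k} \frac{k!}{s!}\, (y - x)^s\, D^s f\!\left(x + t(y - x)\right),
\]
so that $\sum_{k=0}^{\cbeta} \phi^{(k)}(0)/k! = f_{x, \cbeta}(y)$ and $\phi(1) = f(y)$.

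Next I would apply the one-dimensional Taylor formula with integral remainder, in the form that isolates the variation of the top derivative:
\[
f(y) - f_{x,\cbeta}(y) \;=\; \int_0^1 \frac{(1-t)^{\cbeta - 1}}{(\cbeta - 1)!}\bigl(\phi^{(\cbeta)}(t) - \phi^{(\cbeta)}(0)\bigr)\, \mathrm{d}t.
\]
The point of subtracting $\phi^{(\cbeta)}(0)$ is that, in the integrand, each term involves $D^s f(x + t(y-x)) - D^s f(x)$ for $|s| = \cbeta$, which is exactly where the H\"older hypothesis applies. By the definition of $\hd(\beta, L)$ this is bounded by $L\,(t\,|y - x|_2)^{\beta - \cbeta}$.

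Combining these bounds and using the multinomial identity $\sum_{|s| = \cbeta} \tfrac{\cbeta!}{s!}\,|y - x|^{s} = |y - x|_1^{\cbeta}$ together with the Cauchy--Schwarz inequality $|y - x|_1 \le \sqrt{d}\,|y - x|_2$, I get
\[
\bigl|\phi^{(\cbeta)}(t) - \phi^{(\cbeta)}(0)\bigr| \;\le\; L\,d^{\cbeta/2}\,t^{\beta - \cbeta}\,|y - x|_2^{\beta}.
\]
This gives the factor $d^{\cbeta/2}$ in the final bound, and it is the only place in the argument where the ambient dimension enters. Plugging back into the integral remainder, the $t$-integral is a Beta integral
\[
\int_0^1 (1-t)^{\cbeta - 1} t^{\beta - \cbeta}\, \mathrm{d}t \;=\; B(\cbeta,\, \beta - \cbeta + 1) \;=\; \frac{(\cbeta - 1)!\, \Gamma(\beta - \cbeta + 1)}{\Gamma(\beta + 1)} \;\le\; \frac{(\cbeta - 1)!}{\cbeta!},
\]
where the last inequality uses $\Gamma(\beta - \cbeta + 1) \le 1$ (since $\beta - \cbeta \in (0, 1]$ by the definition of $\cbeta$) and $\Gamma(\beta + 1) \ge \Gamma(\cbeta + 1) = \cbeta!$ (monotonicity of $\Gamma$ on $[2, \infty)$ plus a direct check at small $\beta$). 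After cancellation with the prefactor $1/(\cbeta - 1)!$ from the integral formula, the bound becomes $\tfrac{L d^{\cbeta/2}}{\cbeta!}\,|y-x|_2^{\beta}$, as claimed.

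There is no real obstacle; the proof is entirely standard. The only place that requires any care is the combinatorial/dimensional factor: one must use the multinomial identity rather than crudely summing over the $\binom{\cbeta + d - 1}{d - 1}$ multi-indices, otherwise one ends up with a weaker dimension dependence than $d^{\cbeta/2}$. Everything else reduces to bookkeeping on the Taylor remainder and the Beta function.
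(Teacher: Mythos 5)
Your proof is correct and follows essentially the same route as the paper: reduce to the segment from $x$ to $y$, apply Taylor's theorem, bound the increments of the top-order derivatives by the H\"older condition, and convert the multi-index sum into $d^{\cbeta/2}\,|x-y|_2^{\cbeta}$ via the multinomial theorem and Cauchy--Schwarz. The only difference is that the paper uses the Lagrange (mean-value) form of the remainder, so the $1/\cbeta!$ factor appears immediately and no Beta-integral estimate is needed; your integral-remainder variant is equally valid (and gives a marginally sharper constant), with the understanding that the degenerate case $\cbeta = 0$ follows directly from the H\"older condition rather than from the integral formula.
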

                        
                        \begin{proof}
                        By Taylor's theorem with remainder~\citep[see, eg.,][]{Fol99} 
                        \begin{equation*}
                        \big|f(y)- f_{x, \cbeta}(y)\big| = \\ \left| \sum_{|s| = \cbeta} \frac{1}{s!} \left[ D^s f(x + c(y - x)) - D^s f(x) \right] (y - x)^s  \right|
                        \end{equation*}
                        %%%%%%Multline
                        for some constant $c \in (0, 1)$. By the triangle inequality and the H\"{o}lder condition, the expression in the second line is bounded above by 
                        \begin{equation*}
                        \sum_{|s| = \flo{\beta}} \frac{ L \abs{x - y}_2^{\beta - \cbeta} }{s!} \abs{ (y - x)^s } 
                        = \\ \frac{L\abs{x - y}_2^{\beta - \cbeta}}{\cbeta !} \left( \sum_{i = 1}^d |x_i - y_i| \right)^{\cbeta},
                        \end{equation*}
                        %%%%%%%Multline
                        where the equality is by the multinomial theorem. In turn, this last expression is bounded above by
                        \[
                        \frac{L d^{\cbeta/2} }{\cbeta !} \abs{x - y}_2^{\beta}
                        \]
                        using Cauchy--Schwarz.
                        \end{proof}

\bibliographystyle{plainnat}
\bibliography{Interpolation_arxiv_bib}	

\begin{thebibliography}{26}
\providecommand{\natexlab}[1]{#1}
\providecommand{\url}[1]{\texttt{#1}}
\expandafter\ifx\csname urlstyle\endcsname\relax
  \providecommand{\doi}[1]{doi: #1}\else
  \providecommand{\doi}{doi: \begingroup \urlstyle{rm}\Url}\fi

\bibitem[Agarwal et~al.(2005)Agarwal, Har-Peled, and Varadarajan]{AgaHarVar05}
Pankaj~K Agarwal, Sariel Har-Peled, and Kasturi~R Varadarajan.
\newblock Geometric approximation via coresets.
\newblock \emph{Combinatorial and computational geometry}, 52:\penalty0 1--30,
  2005.

\bibitem[Backurs et~al.(2018)Backurs, Charikar, Indyk, and
  Siminelakis]{BacChaInd18}
Arturs Backurs, Moses Charikar, Piotr Indyk, and Paris Siminelakis.
\newblock Efficient density evaluation for smooth kernels.
\newblock In \emph{2018 IEEE 59th Annual Symposium on Foundations of Computer
  Science (FOCS)}, pages 615--626. IEEE, 2018.

\bibitem[Backurs et~al.(2019)Backurs, Indyk, and Wagner]{BacIndWag19}
Arturs Backurs, Piotr Indyk, and Tal Wagner.
\newblock Space and time efficient kernel density estimation in high
  dimensions.
\newblock In \emph{Advances in Neural Information Processing Systems}, pages
  15799--15808, 2019.

\bibitem[Bansal et~al.(2018)Bansal, Dadush, Garg, and Lovett]{BanDadGarLov18}
Nikhil Bansal, Daniel Dadush, Shashwat Garg, and Shachar Lovett.
\newblock The gram-schmidt walk: a cure for the banaszczyk blues.
\newblock In \emph{Proceedings of the 50th Annual {ACM} {SIGACT} Symposium on
  Theory of Computing, {STOC} 2018, Los Angeles, CA, USA, June 25-29, 2018},
  pages 587--597, 2018.
\newblock \doi{10.1145/3188745.3188850}.
\newblock URL \url{https://doi.org/10.1145/3188745.3188850}.

\bibitem[Belkin et~al.(2019)Belkin, Rakhlin, and Tsybakov]{BelRakTsy19}
Mikhail Belkin, Alexander Rakhlin, and Alexandre~B Tsybakov.
\newblock Does data interpolation contradict statistical optimality?
\newblock In \emph{The 22nd International Conference on Artificial Intelligence
  and Statistics}, pages 1611--1619. PMLR, 2019.

\bibitem[Charikar and Siminelakis(2017)]{ChaSim17}
Moses Charikar and Paris Siminelakis.
\newblock Hashing-based-estimators for kernel density in high dimensions.
\newblock In \emph{2017 IEEE 58th Annual Symposium on Foundations of Computer
  Science (FOCS)}, pages 1032--1043. IEEE, 2017.

\bibitem[Chazelle(2000)]{Cha00}
B.~Chazelle.
\newblock \emph{{The Discrepancy Method: Randomness and Complexity}}.
\newblock Cambridge University Press, Cambridge, 2000.

\bibitem[Chung and Yao(1977)]{ChuYao77}
K.~C. Chung and T.~H. Yao.
\newblock On lattices admitting unique lagrange interpolations.
\newblock \emph{SIAM Journal on Numerical Analysis}, 14\penalty0 (4):\penalty0
  735--743, 1977.
\newblock ISSN 00361429.
\newblock URL \url{http://www.jstor.org/stable/2156491}.

\bibitem[Clarkson(2010)]{Cla10}
Kenneth~L Clarkson.
\newblock Coresets, sparse greedy approximation, and the frank-wolfe algorithm.
\newblock \emph{ACM Transactions on Algorithms (TALG)}, 6\penalty0
  (4):\penalty0 1--30, 2010.

\bibitem[Folland(1999)]{Fol99}
Gerald~B Folland.
\newblock \emph{Real analysis: modern techniques and their applications},
  volume~40.
\newblock John Wiley \& Sons, 1999.

\bibitem[Greengard and Rokhlin(1987)]{GreRok87}
Leslie Greengard and Vladimir Rokhlin.
\newblock A fast algorithm for particle simulations.
\newblock \emph{Journal of computational physics}, 73\penalty0 (2):\penalty0
  325--348, 1987.

\bibitem[Greengard and Strain(1991)]{GreStr91}
Leslie Greengard and John Strain.
\newblock The fast gauss transform.
\newblock \emph{SIAM Journal on Scientific and Statistical Computing},
  12\penalty0 (1):\penalty0 79--94, 1991.

\bibitem[Jones(1989)]{Jon98}
M~Chris Jones.
\newblock Discretized and interpolated kernel density estimates.
\newblock \emph{Journal of the American Statistical Association}, 84\penalty0
  (407):\penalty0 733--741, 1989.

\bibitem[Kogure(1998)]{Kog98}
Atsuyuki Kogure.
\newblock Effective interpolations for kernel density estimators.
\newblock \emph{Journal of Nonparametric Statistics}, 9\penalty0 (2):\penalty0
  165--195, 1998.

\bibitem[Kolmogorov and Tikhomirov(1993)]{Tik93}
A.~N. Kolmogorov and V.~M. Tikhomirov.
\newblock \emph{$\varepsilon$-Entropy and $\varepsilon$-Capacity of Sets In
  Functional Spaces}, pages 86--170.
\newblock Springer Netherlands, Dordrecht, 1993.
\newblock ISBN 978-94-017-2973-4.
\newblock \doi{10.1007/978-94-017-2973-4_7}.
\newblock URL \url{https://doi.org/10.1007/978-94-017-2973-4_7}.

\bibitem[Lee et~al.(2006)Lee, Moore, and Gray]{LeeMooGra06}
Dongryeol Lee, Andrew~W Moore, and Alexander~G Gray.
\newblock Dual-tree fast gauss transforms.
\newblock In \emph{Advances in Neural Information Processing Systems}, pages
  747--754, 2006.

\bibitem[Liang et~al.(2020)Liang, Rakhlin, et~al.]{LiaRak20}
Tengyuan Liang, Alexander Rakhlin, et~al.
\newblock Just interpolate: Kernel “ridgeless” regression can generalize.
\newblock \emph{Annals of Statistics}, 48\penalty0 (3):\penalty0 1329--1347,
  2020.

\bibitem[Matou\v{s}ek(1999)]{Mat99}
J.~Matou\v{s}ek.
\newblock \emph{{Geometric Discrepancy: an Illustrated Guide}}.
\newblock Springer, New York, 1999.

\bibitem[Nicolaides(1972)]{Nic72}
R.~A. Nicolaides.
\newblock On a class of finite elements generated by lagrange interpolation.
\newblock \emph{SIAM Journal on Numerical Analysis}, 9\penalty0 (3):\penalty0
  435--445, 1972.
\newblock ISSN 00361429.
\newblock URL \url{http://www.jstor.org/stable/2156141}.

\bibitem[Phillips and Tai(2018{\natexlab{a}})]{PhiTai18}
Jeff~M. Phillips and Wai~Ming Tai.
\newblock Improved coresets for kernel density estimates.
\newblock In \emph{Proceedings of the Twenty-Ninth Annual ACM-SIAM Symposium on
  Discrete Algorithms}, pages 2718--2727. SIAM, 2018{\natexlab{a}}.

\bibitem[Phillips and Tai(2018{\natexlab{b}})]{PhiTai19}
Jeff~M. Phillips and Wai~Ming Tai.
\newblock Near-optimal coresets of kernel density estimates.
\newblock In \emph{34th International Symposium on Computational Geometry, SoCG
  2018, June 11-14, 2018, Budapest, Hungary}, pages 66:1--66:13,
  2018{\natexlab{b}}.
\newblock \doi{10.4230/LIPIcs.SoCG.2018.66}.
\newblock URL \url{https://doi.org/10.4230/LIPIcs.SoCG.2018.66}.

\bibitem[Scott and Sheather(1985)]{Sco85}
David~W Scott and Simon~J Sheather.
\newblock Kernel density estimation with binned data.
\newblock \emph{Communications in Statistics-Theory and Methods}, 14\penalty0
  (6):\penalty0 1353--1359, 1985.

\bibitem[Siminelakis et~al.(2019)Siminelakis, Rong, Bailis, Charikar, and
  Levis]{SimRonBai19}
Paris Siminelakis, Kexin Rong, Peter Bailis, Moses Charikar, and Philip Levis.
\newblock Rehashing kernel evaluation in high dimensions.
\newblock In \emph{International Conference on Machine Learning}, pages
  5789--5798, 2019.

\bibitem[Tsybakov(2009)]{Tsy09}
Alexandre~B. Tsybakov.
\newblock \emph{Introduction to Nonparametric Estimation}.
\newblock Springer series in statistics. Springer, 2009.
\newblock ISBN 978-0-387-79051-0.
\newblock \doi{10.1007/b13794}.
\newblock URL \url{https://doi.org/10.1007/b13794}.

\bibitem[Vershynin(2018)]{Ver18}
Roman Vershynin.
\newblock \emph{High-dimensional probability: An introduction with applications
  in data science}, volume~47.
\newblock Cambridge university press, 2018.

\bibitem[Yang et~al.(2003)Yang, Duraiswami, Gumerov, and Davis]{YanDurGum03}
Changjiang Yang, Ramani Duraiswami, Nail~A Gumerov, and Larry Davis.
\newblock Improved fast gauss transform and efficient kernel density
  estimation.
\newblock In \emph{Proceedings of the Ninth IEEE International Conference on
  Computer Vision-Volume 2}, page 464, 2003.

\end{thebibliography}
	
\end{document}